\newtheorem{theorem}{Theorem}[section]
\newtheorem{lemma}[theorem]{Lemma}
\newtheorem{claim}[theorem]{Claim}
\theoremstyle{definition}
\newtheorem{definition}[theorem]{Definition}
\newtheorem{observation}[theorem]{Observation}
\newcommand{\R}{\mathbb R}
\newcommand{\Z}{\mathbb Z}
\title{Is the space of reachable\\ particle configurations dense?}
\author{J\'anos Pach \thanks{R\'enyi Institute, Budapest. Research partially supported by ERC Advanced Grant ``GeoScape'' and by the IAS/Park City Mathematics Institute during the summer program of 2025.  Email: pach@renyi.hu.} \and G\'abor Tardos \thanks{R\'enyi Institute, Budapest. Research partially supported by National Research, Development and Innovation Office (NKFIH) grants K-132696, SSN-135643, and ERC Advanced Grant ``ERMiD.'' Email: tardos@renyi.hu. }}
\date{}
\begin{document}
\maketitle
\begin{abstract}
Let $p_0,\ldots,p_n$ be a finite sequence of points in an Euclidean space $\R^d$. Suppose that there is a (pointlike) particle sitting at each point $p_i$. In a ``legal'' move, any one of them can jump over another, landing on the other side, at exactly the same distance. Under what circumstances can we guarantee that for any $\varepsilon>0$ and any other sequence of points $q_0,\ldots, q_n\in\R^d$, there is a finite sequence of legal moves that takes the particle at $p_i$ to the $\varepsilon$-neighborhood of $q_i$, simultaneously for every $i$?

We prove that this is possible if and only if the additive group generated by the vectors $p_1-p_0,\ldots,p_n-p_0$ is dense in $\R^d$.
\end{abstract}

\section{Introduction}


Let $n$ and $d$ be a positive integers. Suppose that we have $n+1$ labeled pointlike particles, originally sitting at the points $p_0, p_1, \ldots, p_n\in\R^d$.
At any given moment, one of the particles (the $i$th one, say), currently sitting at a point $p_i'$, will jump over another one (the $j$th one, say), currently at the location $p_j'$, and land at the point symmetric about it, that is, at the point
$$p_i'+2(p_j'-p_i')=2p_j'-p_i'.$$
We call such a move \emph{legal}. It is permitted that sometimes two or more different particles sit at the same place.

The set of configurations $(p'_0, p'_1, \ldots, p'_n)$ that can be reached from a given initial configuration $(p_0, p_1, \ldots, p_n)$ by legal moves, is a subset of $\R^d\times\cdots\times\R^d=\R^{d(n+1)}$. It is called the \emph{space of reachable configurations}.
\smallskip

The aim of the present note is to investigate that under what conditions the space of reachable configurations is \emph{dense} in $\R^{d(n+1)}$.
\medskip

\begin{definition} \emph{An initial particle configuration $(p_0, p_1, \ldots, p_n)$ is said to be \emph{rich} if the space of configurations reachable from it is \emph{dense} in $\R^{d(n+1)}$.}
\end{definition}

In other words, $(p_0, p_1, \ldots, p_n)$ is rich if for every $q_0, q_1,\ldots, q_n\in\R^d$ and for every $\varepsilon>0,$ there is a sequence of legal moves that take the $i$\/th particle (starting at $p_i$) to a position at distance at most $\varepsilon$ from $q_i,$ simultaneously for all $0\le i\le n$. We use the Euclidean distance and norm in $\R^d$, but we note that the choice of norm is irrelevant.

Clearly, $(p_0, p_1, \ldots, p_n)$ is \emph{rich} if and only if $(0,p_1-p_0,\ldots, p_n-p_0)$ is rich. Therefore, in order to characterize all rich configurations, it is enough to consider the cases where $p_0=0$.

Throughout this note, we will assume that $p_0=0$ and we will identify the initial configuration $(0,p_1,\ldots,p_n)$ with the $d\times n$ matrix
$$P=(p_1\;\;p_2\;\;\dots\;\;p_n).$$
We write $G(P)$ for the \emph{additive subgroup of $R^d$ generated} by the vectors $p_1,\ldots,p_n$. In other words, we have $G(P)=\{Pv\mid v\in\Z^d\}$.

\smallskip

The aim of this note is to establish the following result.

\begin{theorem}\label2
Let $n, d>0$ be integers, and let $P=(0,p_1,\ldots, p_n)$ be a configuration of $n+1$ points in $\R^d$. The following two conditions are equivalent.
\begin{enumerate}
    \item $P$ is a rich configuration;
    \item $G(P)$ is \emph{dense} in $\R^d$.
\end{enumerate}
\end{theorem}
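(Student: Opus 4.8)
I would first note that every legal move preserves the group $\Lambda:=G(P)$: the initial coordinates $p_0=0,p_1,\dots,p_n$ all lie in $\Lambda$, and $\Lambda$ is closed under $(x,y)\mapsto 2y-x$, so all coordinates of every reachable configuration lie in $\Lambda$. In fact, reducing modulo $2\Lambda$, the move $x_i\mapsto 2x_j-x_i$ leaves every coordinate's class unchanged (since $2x_j\in 2\Lambda$ and $-x_i\equiv x_i\pmod{2\Lambda}$), so every reachable configuration satisfies $x_i\in p_i+2\Lambda$ for all $i$. Hence the reachable set lies in $\prod_{i=0}^n(p_i+2\Lambda)\subseteq\overline{\Lambda}^{\,n+1}$. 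If $G(P)$ is not dense, $\overline{\Lambda}$ is a proper closed subgroup of $\R^d$, so the reachable set lies in the proper closed set $\overline{\Lambda}^{\,n+1}$ and $P$ is not rich.

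\textbf{The hard direction (2)$\Rightarrow$(1): set-up.} Assume $\Lambda=G(P)$ is dense; then necessarily $n\ge 2$, since a cyclic subgroup of $\R^d$ is never dense. As $2\Lambda$ is also dense, $\prod_{i=0}^n(p_i+2\Lambda)$ is dense in $\R^{d(n+1)}$, so it suffices to prove that for every $q_0,\dots,q_n\in\R^d$ and $\varepsilon>0$ some reachable configuration is within $\varepsilon$ of $(q_0,\dots,q_n)$. Write $\sigma_{ij}$ for the move ``particle $i$ jumps over particle $j$''. The basic gadget is: for distinct $i,a,b$, the composition $\sigma_{ib}\sigma_{ia}$ sends $x_i$ to $x_i+2(x_b-x_a)$ and fixes every other particle (a one-line check). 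Now pass to the coordinates $u_i:=x_i-x_0$ ($i=1,\dots,n$): the moves $\sigma_{ij}$ with $i\ge 1$ leave $x_0$ unchanged and act on $(u_1,\dots,u_n)$ by $u_i\mapsto 2u_j-u_i$ (if $j\ge 1$) or $u_i\mapsto -u_i$ (if $j=0$). Let $\mathcal H_0$ be the group they generate; since $\sigma_{ij}\sigma_{i0}$ realizes $u_i\mapsto u_i+2u_j$, in the representation of $\mathcal H_0$ as integer matrices acting on the left on the $n\times d$ matrix with rows $u_1,\dots,u_n$ (with $E_{ij}$ the matrix units) the group $\mathcal H_0$ contains all $I+2E_{ij}$ ($i\ne j$) and all diagonal sign changes; hence $\mathcal H_0$ contains a finite-index subgroup of $GL_n(\Z)$ (for $n=2$ directly; for $n\ge 3$ because $\langle I+2E_{ij}\rangle$ has finite index in $SL_n(\Z)$).

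\textbf{Reduction to a Key Lemma.} I claim it suffices to prove the \emph{Key Lemma: if $\langle p_1,\dots,p_n\rangle$ is dense in $\R^d$, then the $\mathcal H_0$-orbit of $(p_1,\dots,p_n)$ is dense in $(\R^d)^n$.} Granting it, finish as follows: (i) use $\mathcal H_0$-moves to bring $u_1$ within $\varepsilon/4$ of $q_0/2$, keeping $x_0=0$; (ii) apply $\sigma_{01}$, which sets $x_0\leftarrow 2u_1$ (so $x_0$ is within $\varepsilon/2$ of $q_0$) and merely replaces the $u_i$ by another $n$-tuple still generating $\Lambda$; (iii) apply the Key Lemma again — its hypothesis still holds, and $\mathcal H_0$-moves never touch $x_0$ — to bring $(u_1,\dots,u_n)$ within $\varepsilon/2$ of $(q_1-q_0,\dots,q_n-q_0)$; then $x_0\approx q_0$ and $x_i=x_0+u_i$ is within $\varepsilon$ of $q_i$ for all $i$. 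So everything comes down to the Key Lemma.

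\textbf{The Key Lemma, and the main obstacle.} Inside $\mathcal H_0$ one can translate any $u_i$ by an arbitrary element of $2\langle u_j:j\ne i\rangle$ while fixing the other $u_j$. Using this and the sign changes, a Euclidean-algorithm-type reduction (the factors of $2$ causing no real trouble, since a dense subgroup of $\R^d$ has arbitrarily small elements and arbitrarily small generating sets) first brings the configuration to one with all $|u_i|$ smaller than any prescribed $\delta$, still generating $\Lambda$; from such a tight cluster one can then march the $u_i$ toward their targets one at a time in steps of size $O(\delta)$, the remaining small $u_j$'s furnishing a fine lattice of admissible translation directions, while the already-placed $u_j$'s — placed, by a suitable generic choice, so as to make the relevant $\mathbb Z$-spans dense — take up the slack. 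A bookkeeping of how many ``small'' versus ``placed'' vectors are available at each stage shows that this program goes through whenever $n\ge d+2$. \emph{The one remaining case, the minimal one $n=d+1$, is the crux}: when the last particle is placed, the $d$ others all sit at their (possibly far-apart) targets, yielding only a rank-$d$ lattice of translation directions — too coarse for the clustering argument, which gives merely ``fine-grained'' rather than genuinely dense control of the final particle. I expect to handle this case via the density of orbits of finite-index subgroups of $GL_n(\Z)$ on $\R^{n\times d}$ (equivalently, density of the associated lattice orbits on $\R^n$ and on its $d$-fold diagonal), which yields the sharp dichotomy ``orbit dense $\iff$ the rows generate a dense subgroup of $\R^d$''; for $n=2$ this is the classical density of horocycle orbits on the modular surface (and can also be extracted from continued fractions), and for $n\ge 3$ it follows from ergodicity of suitable non-compact subgroup actions on the homogeneous space. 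Feeding $\Lambda=\langle p_1,\dots,p_n\rangle$ (dense by hypothesis) into this dichotomy gives the Key Lemma in the missing case too, completing the proof.
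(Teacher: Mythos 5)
Your easy direction (1)$\Rightarrow$(2) is correct and essentially the same as the paper's. Your reduction of the hard direction to the ``Key Lemma'' (density of the $\mathcal H_0$-orbit of $(p_1,\dots,p_n)$ in $(\R^d)^n$, with the $x_0$-translation handled by one extra jump $\sigma_{01}$) is also sound, and it parallels the paper's strategy exactly: the Key Lemma is the transpose of the paper's Theorem~\ref{fo} (with $\mathcal H_0$ playing the role of the group of good/admissible matrices), and your step~(ii) is the content of Lemma~\ref{altalanos}(ii).

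The genuine gap is that you never prove the Key Lemma, and it is the entire technical content of the theorem. For $n\ge d+2$ you give only a sketch (``bring all $u_i$ small, then march them to their targets''), with the crucial density-bookkeeping deferred to a sentence; a dense subgroup generated by $\delta$-small vectors is dense but not $O(\delta)$-dense, so ``steps of size $O(\delta)$'' does not by itself deliver the claim, and the interplay between already-placed vectors (which must be chosen ``generically'' to keep spans dense) and still-small ones is precisely where the difficulty lies. For the minimal case $n=d+1$ — which is the case that actually pins down the threshold and which Observation~\ref{obs1} shows is the boundary of possibility — you defer entirely to ``density of orbits of finite-index subgroups of $GL_n(\Z)$ on $\R^{n\times d}$,'' invoking horocycle density for $n=2$ and ``ergodicity of suitable non-compact subgroup actions'' for $n\ge 3$. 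This is not a proof: ergodicity gives density of almost every orbit, not of the specific orbit of $P$, and the sharp dichotomy you want (orbit dense $\iff$ $G(P)$ dense) is a nontrivial Dani--Raghavan-type theorem that would need to be stated precisely, cited, and checked to apply to the particular finite-index subgroup $\mathcal H_0$ rather than to $SL_n(\Z)$ itself. The paper instead proves Theorem~\ref{fo} by a self-contained elementary induction on $d$: it finds a primitive integer vector $v$ with $Pv$ arbitrarily short, uses a good matrix to rotate $v$ to $e_n$ (Lemma~\ref1(ii)--(iii)), peels off one row and one column, applies the inductive hypothesis to the $(d-1)\times(n-1)$ block, and then removes the ``last column zero'' restriction with Lemma~\ref{gen}. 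If you want to pursue the homogeneous-dynamics route, you would need to locate and verify the exact Dani--Raghavan statement for $n\times d$ matrices and for the subgroup $\mathcal H_0$; as written, your proposal replaces the hardest step of the paper with an unsubstantiated appeal to external results.
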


It is trivial that condition 1 implies condition 2. Indeed, legal moves keep all particles in $G(P)$, so by legal moves we cannot bring the first particle close to every prescribed point in $\R^d$, unless $G(P)$ is dense in $\R^d$. Much less can we bring all the particles, simultaneously, close to every prescribed configuration.

Proving that condition 2 implies condition 1 is more involved. The bulk of this note is devoted to this task.

Before turning to the proof, we make some simple observations.

\begin{observation}\label{obs1}
\emph{If $G(P)$ is dense in $\R^d$, then $n>d$.}
\end{observation}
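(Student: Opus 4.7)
My plan is to prove the contrapositive: assume $n \le d$ and show that $G(P)$ fails to be dense in $\R^d$. The key observation is that $G(P)$ is confined to the real linear span of $p_1,\ldots,p_n$, and a combination of a dimension count with a discreteness argument will finish the job.

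First I would set $V = \mathrm{span}_\R(p_1,\ldots,p_n) \subseteq \R^d$, which is a linear subspace of dimension at most $n \le d$. Since $G(P)$ consists of integer combinations of the $p_i$, we have $G(P)\subseteq V$. If $\dim V < d$ (which certainly occurs when $n<d$, and can also occur when $n=d$ if the $p_i$ are linearly dependent), then $V$ is a proper closed linear subspace of $\R^d$, so $G(P)$ is contained in a proper closed subset of $\R^d$ and is therefore not dense.

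The remaining case is $n=d$ with $p_1,\ldots,p_d$ linearly independent. Here the matrix $P$ defines an invertible linear map, hence a homeomorphism $\R^d\to\R^d$, and $G(P)=P(\Z^d)$. Since $\Z^d$ is discrete in $\R^d$ and discreteness is preserved by homeomorphisms, $G(P)$ is a discrete subgroup of $\R^d$. A discrete set in $\R^d$ (with $d\ge 1$) has empty interior in its closure's complement in a meaningful way; more simply, it is closed and proper, so it cannot be dense. This completes the contrapositive.

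The argument is short and I do not expect any serious obstacle; the only mildly delicate point is the borderline case $n=d$, which requires invoking the discreteness of a full-rank lattice rather than a pure dimension argument. One could alternatively package both cases uniformly by observing that any subgroup of $\R^d$ generated by at most $d$ elements is contained in a finitely generated, hence closed and nowhere dense, subgroup of the form (subspace)$\,\oplus\,$(lattice in a complementary subspace), but splitting into the two cases above is the most direct route.
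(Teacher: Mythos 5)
Your argument is correct and follows essentially the same route as the paper: a dimension count showing the real span of the columns must be all of $\R^d$ (forcing $n\ge d$), followed by the observation that in the borderline case $n=d$ the group $P(\Z^d)$ is a discrete (full-rank) lattice, hence closed, proper, and not dense. The only cosmetic issue is the garbled sentence about ``empty interior in its closure's complement,'' which you can simply delete since the closed-and-proper argument that follows it already suffices.
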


\begin{proof}
Note that $G(P)$ is contained in the subspace generated by the columns of $P$ over the reals, so if $G(P)$ is dense in $\R^d$, then this subspace must be the entire space. This implies that $n\ge d$. If $n=p$, the columns of $P$ may linearly generate the entire space, but in this case $G(P)$ is a discrete lattice, i.e., it is not dense in $\R^d$. Thus, $n$ must be strictly greater than $d$.
\end{proof}

There is a simple reformulation of the condition that $G(P)$ is not dense to a positive statement which may be easier to handle, see Lemma~\ref3 below. As we were unable to find a reference for it, we include a proof in the Appendix.
\smallskip

We write $M^T$ to denote the \emph{transpose} of a matrix $M$. The vectors $w\in\R^d$ are considered column vectors, so the corresponding row vector is $w^T$.

\begin{lemma}\label3
Let $n, d>0$ be integers, and let $P$ be a $d\times n$ real matrix. The following two conditions are equivalent.
\begin{enumerate}
\item $G(P)$ is not dense in $\R^d$;
\item There exists a nonzero vector $w\in\R^d$ such that $w^TP$ is an integer vector.
\end{enumerate}
\end{lemma}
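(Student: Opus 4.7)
The direction (2) $\Rightarrow$ (1) is immediate: if $w\neq 0$ and $w^T P\in\Z^n$, then $w^T(Pv)\in\Z$ for every $v\in\Z^n$, so $G(P)$ lies in the closed set $\{x\in\R^d:w^T x\in\Z\}$, a countable union of parallel hyperplanes with empty interior.

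For the converse, the plan is to apply the structure theorem for closed subgroups of $\R^d$ to $H:=\overline{G(P)}$, which by hypothesis is a proper closed subgroup. The theorem asserts that $H=V\oplus\Lambda$, where $V$ is a linear subspace and $\Lambda$ is a discrete subgroup of some complementary subspace $W$; moreover $\Lambda$ has a $\Z$-basis $\lambda_1,\ldots,\lambda_r$ that is $\R$-linearly independent, so $r=\dim\mathrm{span}(\Lambda)\leq d-\dim V$. Granting this, the desired $w$ is easy to produce. If $r<d-\dim V$, let $\phi$ be any nonzero linear functional vanishing on the proper subspace $V+\mathrm{span}(\Lambda)$; then $\phi(\Lambda)=\{0\}\subseteq\Z$. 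Otherwise $\lambda_1,\ldots,\lambda_{d-\dim V}$ is an $\R$-basis of $W$; define $\phi$ by $\phi|_V=0$, $\phi(\lambda_1)=1$, and $\phi(\lambda_i)=0$ for $i\geq 2$. In either case $\phi\neq 0$ and $\phi(H)\subseteq\Z$, so the vector $w\in\R^d$ representing $\phi$ via $\phi(x)=w^T x$ is nonzero and satisfies $w^T P\in\Z^n$.

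The remaining task is to prove the structure theorem. The plan is a direct two-step construction. First, set $V:=\{v\in\R^d:\R v\subseteq H\}$; one checks that $V$ is a closed linear subspace of $\R^d$ contained in $H$ (closure under addition: if $u,v\in V$ and $t\in\R$, then $tu,tv\in H$ gives $t(u+v)\in H$, so $\R(u+v)\subseteq H$), and by construction $V$ is maximal among subspaces contained in $H$. Second, fix any complementary subspace $W$ and set $\Lambda:=H\cap W$. By maximality of $V$, $\Lambda$ contains no nonzero line through the origin. The key lemma now is: a closed subgroup of $\R^d$ containing no line through the origin is discrete. To prove this, assume for contradiction that such an $H$ is not discrete; then there exist $h_n\in H\setminus\{0\}$ with $h_n\to 0$. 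After passing to a subsequence, $h_n/\|h_n\|\to u$ for some unit vector $u$, and for each $t\in\R$ the integers $k_n:=\lfloor t/\|h_n\|\rfloor$ satisfy $k_n h_n\to tu$, so $\R u\subseteq H$, contradicting the hypothesis. Thus $\Lambda$ is discrete, and decomposing any $h\in H$ along $\R^d=V\oplus W$ shows $H=V\oplus\Lambda$. The free abelian structure of $\Lambda$ with an $\R$-linearly independent basis is then the classical description of a discrete subgroup of Euclidean space.

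The main obstacle is the "no-line implies discrete" lemma above; it is handled by the normalized-subsequence trick. Everything else is routine verification of the algebraic bookkeeping.
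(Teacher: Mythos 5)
Your proof is correct, and it takes a somewhat different route from the paper. The paper argues by induction on $d$: if $\overline{G(P)}$ is discrete it is a lattice and a dual vector $w$ is chosen directly; otherwise the same normalized-subsequence trick you use produces a whole line $L\subseteq\overline{G(P)}$, and the problem is projected onto $L^{\perp}$ and handled by the induction hypothesis one dimension lower. You instead prove, in one step and without induction, the full structure theorem for the proper closed subgroup $H=\overline{G(P)}$, namely $H=V\oplus\Lambda$ with $V$ the maximal linear subspace contained in $H$ and $\Lambda=H\cap W$ discrete in a complement $W$, and then read off $w$ from an explicit linear functional vanishing on $V$ and integer-valued on a lattice basis of $\Lambda$. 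The two arguments share their essential ingredients --- the observation that $\lfloor t/\|h_n\|\rfloor h_n\to tu$ forces a line into a non-discrete closed subgroup, and the classical fact (cited, not proved, in both treatments) that a discrete subgroup of Euclidean space is generated by $\R$-linearly independent vectors --- but your organization replaces the paper's dimension-by-dimension peeling with a single global decomposition, which makes the final construction of $w$ cleaner at the cost of stating and proving the structure theorem explicitly. Two small points you should make explicit: the passage from ``$\Lambda$ not discrete'' to ``nonzero $h_n\in\Lambda$ with $h_n\to0$'' uses that in a topological group non-discreteness is equivalent to $0$ being an accumulation point (take differences of nearby distinct elements), and in your second case you implicitly need $d-\dim V\ge1$ so that $\lambda_1$ exists; this holds because $\dim V=d$ would force $H\supseteq V=\R^d$, contradicting the assumption that $G(P)$ is not dense. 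Neither is a genuine gap.
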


However, for the proof of Theorem~\ref2, we need another equivalent form of the property that $G(P)$ is dense. To formulate our statement, we need to introduce a special class of matrices.

\begin{definition} \emph{A \emph{step matrix} is a square matrix that is obtained from the identity matrix by replacing one of its off-diagonal entries by $2$ or $-2$.}

\emph{A matrix is called \emph{good} if it belongs to the multiplicative group generated by step matrices.}
\end{definition}

The main technical tool required for the proof of Theorem~\ref2 is the following.

\begin{theorem}\label{fo}
Let $n, d>0$ be integers, and let $P$ be a $d\times n$ real matrix. The following two statements are equivalent.
\begin{enumerate}
\item $G(P)=\{Pv\mid v\in\Z^n\}$ is dense in $\R^d$;
\item $\{PA\mid A \hbox{ is good}\}$ is dense among the $d\times n$ real matrices.
\end{enumerate}
\end{theorem}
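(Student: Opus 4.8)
I would prove the two implications of \ref{fo} separately; the first is routine and the second carries all the weight.

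\smallskip
\noindent\emph{Condition 2 implies condition 1.} Writing $E_{k\ell}$ for the matrix with a single $1$ in row $k$, column $\ell$ (and zeros elsewhere), one has $(I\pm2E_{k\ell})^{-1}=I\mp2E_{k\ell}$, so the inverse of a step matrix is again a step matrix; hence every good matrix is a finite product of step matrices, and in particular has integer entries. Thus the first column of $PA$ equals $P$ times the (integer) first column of $A$, and so lies in $G(P)$. If $\{PA:A\text{ good}\}$ were dense in $\R^{d\times n}$, then its image under the projection onto the first column, a continuous open surjection, would be dense in $\R^d$; since this image lies in $G(P)$, $G(P)$ would be dense.

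\smallskip
\noindent\emph{Condition 1 implies condition 2: the engine.} I would induct on $n$. The elementary observation is that right multiplication by $I\pm2E_{k\ell}$ adds $\pm2$ times column $k$ to column $\ell$ and leaves all other columns fixed; so a run of operations that only ever touches column $\ell$ replaces $p_\ell$ by $p_\ell+2v$ for an arbitrary $v$ in the subgroup generated by the remaining columns, while keeping those columns unchanged. This gives the inductive engine: \textbf{if some $n-1$ of the columns of $P$, say $p_1,\dots,p_{n-1}$, already generate a dense subgroup of $\R^d$, then $P$ satisfies condition~2.} Indeed $2\langle p_1,\dots,p_{n-1}\rangle$ is dense, so operations touching only column $n$ bring $p_n$ within $\varepsilon$ of any prescribed $m_n$ without disturbing the others; then the inductive hypothesis, applied to the $d\times(n-1)$ matrix $(p_1\;\cdots\;p_{n-1})$ (whose group is still dense) and to $m_1,\dots,m_{n-1}$, supplies a good $(n-1)\times(n-1)$ matrix $A'$ with $(p_1\;\cdots\;p_{n-1})A'$ within $\varepsilon$ of $(m_1\;\cdots\;m_{n-1})$; the block matrix $\operatorname{diag}(A',1)$ is good, and applying it finishes the job. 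By \ref{obs1} the engine can run only when $n\ge d+2$.

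\smallskip
\noindent\emph{Feeding the engine ($n\ge d+2$).} Suppose no $n-1$ columns generate a dense group, so $H:=\overline{\langle p_1,\dots,p_{n-1}\rangle}$ is a proper closed subgroup of $\R^d$, hence of the form $V_0\oplus L$ with $V_0$ a subspace and $L$ a lattice in a complementary subspace. Density of $\langle p_1,\dots,p_n\rangle$ forces the image of $p_n$ to generate a dense cyclic subgroup of $\R^d/H$, which is possible only if $\R^d/H$ is compact; thus $L$ has full rank in its span, and after an $\R$-linear change of coordinates on $\R^d$ (which affects neither hypothesis nor conclusion) we may take $H=\R^{d-b}\times\Z^b$ with $1\le b\le d$. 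Then $p_1,\dots,p_{n-1}$ have integer last $b$ coordinates and integrally span $\Z^b$ there, while the last $b$ coordinates of $p_n$ are totally irrational modulo $\Z^b$. One now has to inject this irrationality: by adding suitably chosen even multiples of $p_n$ to a bounded number of the other columns, and checking via \ref3 that the real span of the resulting columns still meets $\Z^n$ only at $0$, one should obtain $n-1$ columns — necessarily excluding $p_n$ — that generate a dense group, re-running the engine. Showing that a valid choice always exists (e.g.\ compensating when the removed column's integer part was needed to span $\Z^b$) is the first place where I expect the real work to lie.

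\smallskip
\noindent\emph{The base case $n=d+1$.} Here no column can be dropped, so a direct argument is unavoidable; after a change of coordinates, $P=(e_1\;\cdots\;e_d\;\;\theta)$ where, by \ref3, $1,\theta_1,\dots,\theta_d$ are linearly independent over $\Q$, and the goal is the density in $(\R^d)^{d+1}$ of the orbit of $(e_1,\dots,e_d,\theta)$ under the group $G$ of good matrices. Since the set of step matrices is closed under transposition, $G$ is transpose‑closed, and passing to transposes rephrases the goal as: the $G$-orbit of the $d$-tuple of rows of $P$ is dense in $(\R^{d+1})^d$ under the diagonal action of $G\le\operatorname{SL}_{d+1}(\Z)$. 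I would first establish the analogous statement for the full group — the $\operatorname{SL}_{d+1}(\Z)$-orbit of a $d$-tuple of vectors in $\R^{d+1}$ is dense precisely when the vectors are independent and their span meets $\Z^{d+1}$ only at $0$, which for our tuple is exactly the $\Q$-independence of $1,\theta_1,\dots,\theta_d$ — and then transfer it to $G$, which has finite index in $\operatorname{SL}_{d+1}(\Z)$ (for $d=1$, $G$ is the index‑$12$ Sanov subgroup; for $d\ge2$, the level‑$2$ elementary subgroup has finite index). The transfer is a Baire‑category argument: writing $\operatorname{SL}_{d+1}(\Z)$ as finitely many cosets of $G$, some coset translate of our orbit, and hence the orbit itself, has closure with nonempty interior, and a closed $G$-invariant set with nonempty interior containing a totally irrational point must be everything, using that $G$ contains hyperbolic elements and acts minimally on the relevant projective space to spread any open set. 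Proving the $\operatorname{SL}_{d+1}(\Z)$-orbit statement together with this spreading claim is the second and, I expect, the harder obstacle; once the base case and the injection step of the previous paragraph are in hand, the remainder is the bookkeeping described above.
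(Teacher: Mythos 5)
Your easy direction matches the paper's. For the substantive direction your route is genuinely different: you induct on the number of columns $n$, split into the case where some $n-1$ columns already generate a dense subgroup (your ``engine'') and the case where they do not, and ground everything in a base case $n=d+1$ handled via orbit density for $\mathrm{SL}_{d+1}(\Z)$ plus a Baire-category transfer to the finite-index subgroup. The paper instead inducts on the dimension $d$: from density it extracts a primitive integer vector $v$ with $Pv$ arbitrarily short and of the right parity, uses Lemma~\ref1(ii)--(iii) to find a good $A$ with $Av=e_n$, rotates $Pv$ onto the last axis, and reads off a block decomposition $OPA^{-1}=\begin{pmatrix}P'&0\\ w^T&\varepsilon\end{pmatrix}$ in which $P'$ is a $(d-1)\times(n-1)$ matrix that still has dense $G(P')$. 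Good block-lower-triangular matrices $B(B',y)$ then approximate any target with vanishing last column, and Lemma~\ref{gen} (triangularization by good matrices) removes that restriction. This stays entirely within elementary integer linear algebra.

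Your proposal, as you yourself flag, has two real gaps, and I think both are serious. First, the ``feeding the engine'' step: you need to argue that when no $n-1$ of the columns generate a dense subgroup, one can add even multiples of $p_n$ to a bounded number of other columns so that afterwards $p_1,\dots,p_{n-1}$ do generate a dense subgroup; you give no construction and no proof that such a modification exists, and it is not at all obvious (one has to simultaneously keep the integral lattice part of $H=\overline{\langle p_1,\dots,p_{n-1}\rangle}$ full and make the quotient-circle directions irrational enough, while using only even multiples). Second, and more fundamentally, the base case $n=d+1$ is offloaded onto the statement that the $\mathrm{SL}_{d+1}(\Z)$-orbit of a $d$-frame whose span meets $\Z^{d+1}$ trivially is dense in $(\R^{d+1})^d$. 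For $d\ge 2$ this is not a routine fact: it is a theorem of Dani--Raghavan on density of frame orbits (or, in modern language, an application of Ratner-type unipotent dynamics), and your subsequent ``spreading'' argument (closed $G$-invariant set with interior containing a totally irrational point is everything, via hyperbolic elements and minimality on projective space) is sketched rather than proved. Even if all of this can be made to work, you would be proving an elementary-sounding statement by importing deep homogeneous-dynamics machinery at the exact point where a self-contained argument is most needed. The paper's induction on $d$ sidesteps the base-case difficulty entirely: there is no analogue of the $n=d+1$ bottleneck, because the induction bottoms out at $d=0$, which is vacuous, and the inductive step only uses short-vector extraction plus Lemma~\ref{gen}.

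Two smaller remarks. Your engine is correct provided you perform the column-$n$ adjustment \emph{before} applying $\operatorname{diag}(A',1)$, since the columns you are adding to $p_n$ must still be $p_1,\dots,p_{n-1}$ and not their images under $A'$; your write-up has the order backwards as stated, though it is easily repaired. Also, as you note, the engine needs $n\ge d+2$; by Observation~\ref{obs1} density already forces $n\ge d+1$, so the only case that escapes the engine is exactly $n=d+1$, which is consistent with your taking it as the base case.
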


The problem of analyzing the movement of particles according to the above rules was originally suggested by Alexander Kovaldzhi~\cite{F06}. Answering a question of Florestan Brunck~\cite{B}, we gave a simple characterization of all configurations of particles that can be reached from a given configuration, using only legal moves. This enabled us to show that if the initial particle configuration is a regular $(n+1)$-gon in the plane, then the space of reachable configurations always contains a strictly larger regular $(n+1)$-gon, unless $n+1=3$, $4$, or $6$; see~\cite{PaT24}.
\medskip

In Section~\ref{sec3}, we establish some elementary properties of step matrices, and give a characterization of good matrices. These properties will be used in Section~\ref{sec4} to prove Theorem~\ref{fo}. In Section~\ref{sec5}, we prove Theorem~\ref2 by combining Theorem~\ref{fo} and the characterization of reachable configurations, given in \cite{PaT24}. The last section contains some concluding remarks and open problems.
\medskip

The problem addressed in this note is broadly related to several topics in pure and applied mathematics and computer science. For instance, the theory of \emph{discrete dynamical systems} deals with models in which changes occur in discrete time steps, according to a fixed rule; see~\cite{Ga}. Solitary games in which the player can choose one from several possible legal moves, are studied in the framework of \emph{combinatorial game theory}~\cite{AlNW, BeCG}. Similar models are analyzed in \emph{motion planning} and \emph{robotics}, related to the reconfiguration of rotating or sliding systems of modules~\cite{DuSY}.

\section{Elementary properties of step matrices}\label{sec3}

Recall that a step matrix is a square matrix which is obtained from the identity matrix by writing $2$ or $-2$ instead of one of its off-diagonal entries (which was originally $0$). It is easy to check that the $i$\/th power of a step matrix can be obtained from it by multiplying its $\pm2$ entry by $i$ and in particular, the inverse of a step matrix is also a step matrix.

Notice that all step matrices have determinant $1$, and thus the good matrices (the matrix group generated by the step matrices) form a subgroup of $SL(n,\Z)$. Some elementary properties of good matrices, needed in the rest of this note, are summarized in the following lemma.

\begin{lemma}\label1
\begin{itemize}
\item[(i)]
Let $n$ be a positive integer and $v\in\R^n$. There exists a $n\times n$ step matrix $A$ such that the vector $Av$ is shorter than $v$, unless every entry of $v$ is equal to $0$, $a$, or $-a$, for some $a\in\R$.
\item[(ii)]
Let $n$ be a positive integer and $v\in\Z^n$ a nonzero vector. There exists a good $n\times n$ matrix $A$ such that the only entries of $Av$ are $a$, $-a$, and $0$, where $a$ is the greatest common divisor of the entries of $v$.
\item[(iii)]
A square matrix $A$ is good if and only if
\begin{enumerate}
\item $\det(A)=1$;
\item the non-diagonal entries of $A$ are even integers; and
\item the diagonal entries of $A$ are integers congruent to $1$ modulo $4$.
\end{enumerate}
\end{itemize}
\end{lemma}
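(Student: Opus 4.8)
I would prove the three parts in the order stated, each using the previous ones.

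\medskip
\noindent\textbf{Part (i).} The plan is just to compute the effect of a step matrix on the squared norm. If $A=I+sE_{ij}$ with $s=\pm2$ and $i\neq j$, then $Av$ agrees with $v$ except in the $i$-th coordinate, which becomes $v_i+sv_j$, so $\|Av\|^2-\|v\|^2=2sv_iv_j+s^2v_j^2=4v_j(v_j\pm v_i)$, the sign matching that of $s$. I would set $a=\max_k|v_k|$. If some entry $v_j$ satisfies $0<|v_j|<a$, pick $i$ with $|v_i|=a$ (so $i\neq j$) and choose the sign so that $\pm v_i$ has sign opposite to $v_j$; then $v_j\pm v_i$ is nonzero with sign opposite to $v_j$, so the displayed quantity is negative and $\|Av\|<\|v\|$. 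If no such $j$ exists, every nonzero entry of $v$ has absolute value $a$, i.e. every entry equals $0$, $a$, or $-a$. (For $n=1$ there are no step matrices and the exceptional conclusion holds trivially.)

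\medskip
\noindent\textbf{Part (ii).} First observe that any good matrix lies in $GL_n(\Z)$, hence it and its inverse send $\Z^n$ to $\Z^n$, so multiplication by a good matrix preserves the gcd of the entries of an integer vector. Now iterate (i): starting from $v$, as long as the current integer vector is not of the exceptional form, part (i) produces a step matrix strictly decreasing its squared length, which is a nonnegative integer; so after finitely many steps we reach a vector $Bv$, where $B$ is a product of step matrices (hence good), all of whose entries lie in $\{0,a',-a'\}$ for some $a'\ge 0$. Since $v\neq 0$ and the gcd is preserved, the vector $Bv$ is nonzero and its nonzero entries are $\pm a'$, forcing $a'=\gcd(v)=a$.

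\medskip
\noindent\textbf{Part (iii).} For the ``if'' direction I would let $\mathcal G$ denote the set of matrices satisfying (1)--(3) and check that $\mathcal G$ is a group containing every step matrix; since good matrices form the group generated by step matrices, this gives $\mathrm{good}\subseteq\mathcal G$, which is exactly the ``good $\Rightarrow$ (1)--(3)'' direction. Each step matrix visibly satisfies (1)--(3). Writing an element of $\mathcal G$ as $I+2X$ with $X$ integral and even on the diagonal, a one-line computation $(I+2X)(I+2Y)=I+2(X+Y+2XY)$ shows closure under products, and $\det=1$ (integrality of the inverse) together with $A^{-1}\equiv I \pmod 2$ and the relation $W=-X-2XW$ for $A^{-1}=I+2W$ gives closure under inverses. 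For the ``only if'' direction, given $A\in\mathcal G$ I induct on $n$, the case $n=1$ being $A=I$. The first column $v$ of $A$ is primitive since $\det A=1$, so by (ii) there is a good $B$ with $Bv\in\{0,\pm1\}^n$; but $BA\in\mathcal G$ (as $\mathcal G$ is a group containing $B$), so the first entry of $Bv$ is $\equiv 1\pmod 4$ and the others are even, which forces $Bv=e_1$. Thus $BA=\left(\begin{smallmatrix}1 & r^T\\ 0 & A'\end{smallmatrix}\right)$ with $r$ even and $A'\in\mathcal G$ of size $n-1$; right-multiplying $BA$ by suitable powers of the step matrices $I\pm 2E_{1j}$ (which, since the first column is $e_1$, only add even amounts to the first row) clears $r$, leaving $\left(\begin{smallmatrix}1 & 0\\ 0 & A'\end{smallmatrix}\right)$. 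By the inductive hypothesis $A'$ is a product of $(n-1)\times(n-1)$ step matrices; block-embedding these gives that $\left(\begin{smallmatrix}1 & 0\\ 0 & A'\end{smallmatrix}\right)$ is good, and undoing the good left and right multipliers shows $A$ is good.

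\medskip
The step I expect to require the most care is the reduction of the first column to $e_1$ in part (iii): the gcd argument from (ii) only gives a $\{0,\pm1\}$-vector, and it is precisely the diagonal condition mod $4$ that rules out the column $-e_1$ and more generally pins down $\mathcal G$ inside the larger level-$2$ congruence group. A secondary point to get right is that the ``$2$-adic Euclidean'' iteration in (i)--(ii) genuinely terminates, which is guaranteed because the only obstruction to shortening is the exceptional configuration identified in (i).
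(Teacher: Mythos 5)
Your proposal is correct and follows essentially the same approach as the paper's proof: the norm-decreasing iteration for parts (i)--(ii), and for (iii) showing the set defined by the congruence conditions is a group containing the step matrices, then an induction that reduces a column to a standard basis vector, clears a row, and peels off one dimension. The only slip is a labeling one: you call ``good $\Rightarrow$ (1)--(3)'' the ``if'' direction and ``(1)--(3) $\Rightarrow$ good'' the ``only if'' direction, but for the biconditional ``$A$ is good if and only if (1)--(3)'' these labels should be swapped.
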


\begin{proof}
To see part~(i), consider a vector $v\in\R^n$ with two entries $a$ and $b$ satisfying $|b|>|a|>0$. If we multiply $v$ by a suitable step matrix $A$ we can change its entry $b$ to either $b+2a$ or $b-2a$, without altering any other entries. One of these changes decreases the Euclidean norm of the vector, that is, we have $||Av||<||v||.$
\medskip

For part~(ii), we take a vector $v\in\Z^n$ and apply part~(i) to it, repeatedly. The length of the resulting integer vector decreases in every step. Hence, we must get stuck after finitely many steps at an integer vector $Av$, where $A$ is good and all nonzero entries of $Av$ have the same absolute value. During this process, the greatest common divisor, $a$, of the entries of the vector does not change, therefore all entries of $Av$ must be $a$, $-a$ or $0$, as claimed.
\medskip

Notice that every step matrix satisfies all three conditions listed in part~(iii). Moreover, it is easy to see that the set of matrices $A$ satisfying these conditions form a subgroup. This proves the ``only if'' direction of part~(iii).
\medskip

It remains to prove the ``if'' direction of part~(iii). We proceed by induction on $n$. The case $n=1$ is trivial (only the identity matrix satisfies the conditions), so we assume $n>1$ and also that the statement holds for smaller matrices.

\smallskip

Consider an $n\times n$ integer matrix $X$ satisfying the three conditions in the statement. We will multiply $X$ from the left with good matrices. Note that the matrices we obtain during this process must all satisfy our three conditions.
Our goal is to transform the matrix into simpler and simpler matrices. When we arrive at the identity matrix, we have succeeded in proving that $X$ is good.

First, we apply part~(ii) of the lemma to the last column $v$ of $X$. Note that the greatest common divisor of the entries of $v$ divides $\det(X)=1$. Thus, we can find a good matrix $A$ with all entries of $Av$ being $1$, $-1$ or $0$. Note that the last column of $AX$ is $Av$, and $AX$ satisfies the divisibility conditions, so its last column, $Av$, must be equal to $e_n$, the last standard basis vector of $\R^n$ (i.e., all-zero with a single $1$ entry in the last place).

Let $B$ denote the submatrix of $AX$ formed by the first $n-1$ rows and columns. It must satisfy the same divisibility conditions as $AX$ does. Expanding the determinant of $AX$ along its last column, we obtain that $\det(B)=\det(AX)=1.$ By the induction hypothesis, $B$ must be good. So, in particular, $B^{-1}$ can be written as product of step matrices: $B^{-1}=\prod_{i=1}^kC_i$. Let $f(C)$ stand for the extension of the $(n-1)\times(n-1)$ matrix $C$ by a last row and column containing only zeros except for the $1$ in the diagonal. The function $f$ preserves multiplication, so we have $f(B^{-1})=\prod_{i=1}^kf(C_i)$. Here, $f(C_i)$ is a step matrix for every $i$, so $f(B^{-1})$ is also a good matrix.

Let us consider now the matrix $f(B^{-1})AX$. It differs from the identity matrix only in the first $n-1$ entries in the last row. These entries must be even, as our divisibility conditions must be satisfied. The $i$\/th entry in the last row can then be get rid of by multiplying it with the appropriate power of the step matrix having $2$ in the same position. This completes our project to obtain the identity matrix by multiplying $X$ with good matrices and, therefore, shows that $X$ is good.
\end{proof}

Given a not necessarily square $n\times d$ matrix $X=(x_{ij})^{1\le i\le n}_{1\le j\le d}$, its \emph{main diagonal} or, in short, \emph{diagonal} consists of all entries of the form $x_{ii}$. Accordingly, the entries \emph{below the diagonal} are $x_{ij}$ with $i>j$. Note that if $n>d$, then all the entries in the last $n-d$ rows of $A$ are below the diagonal. We will need the following statement that is somewhat similar to Lemma~\ref1(ii).

\begin{lemma}\label{gen}
Let $n$ and $d$ be positive integers, $n\ge d$, and let $X$ be an $n\times d$ integer matrix with all of its diagonal entries odd and all of its entries below the diagonal even.

Then there exists a good $n\times n$ matrix $A$ such that all entries of $AX$ below the diagonal are zero.
\end{lemma}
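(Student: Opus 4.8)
The plan is to clear the entries below the diagonal column by column, from the first column to the last, using the fact that the allowed operations are exactly left-multiplication by good matrices. Fix a column index $j$, and suppose inductively that columns $1,\ldots,j-1$ already have only zeros below the diagonal; note that throughout the process the matrix retains the property of having odd diagonal entries and even sub-diagonal entries, since good matrices preserve this (by Lemma~\ref1(iii), combined with a direct check). Look at the portion of column $j$ consisting of the entries in rows $j, j+1, \ldots, n$: the top one $x_{jj}$ is odd, the rest are even. I want to apply an operation like Lemma~\ref1(ii), but restricted to this block of rows so as not to disturb the already-cleared columns $1,\ldots,j-1$ (their nonzero entries all sit in rows $\le j-1$, so any good matrix acting only on rows $j,\ldots,n$ leaves them untouched).

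The key sub-step is therefore: given a column vector $v=(v_j,\ldots,v_n)^T\in\Z^{\,n-j+1}$ with $v_j$ odd and $v_{j+1},\ldots,v_n$ even, find a good $(n-j+1)\times(n-j+1)$ matrix that sends $v$ to a vector whose only nonzero entry is the first one. Here is where the parity is essential: halve all the even entries mentally — but we cannot literally halve, so instead I argue as follows. The entries $v_{j+1},\ldots,v_n$ are even, so $v_{j+1}/2,\ldots,v_n/2$ are integers; run the norm-reduction procedure of Lemma~\ref1(i)--(ii) but only ever adding $\pm 2$ times the \emph{first} coordinate to the others. Concretely, using a step matrix with its $\pm 2$ in position $(k,1)$ (for $k>1$) replaces $v_k$ by $v_k\pm 2v_j$; since $v_j$ is odd and $v_k$ is even, and $|v_j|$ is bounded, finitely many such moves bring every $v_k$ with $k>1$ into the range $\{-|v_j|, \ldots, |v_j|\}$ — and more usefully, we can instead run the full Euclidean-algorithm style reduction on the whole block and, because $\gcd(v_j,v_{j+1},\ldots,v_n)=\gcd(v_j,\ldots)$ divides the odd number $v_j$, the gcd is odd; Lemma~\ref1(ii) then yields a good matrix making all nonzero entries equal to $\pm(\text{that gcd})$. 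But now the diagonal/sub-diagonal parity is back in force: in the transformed block the first entry must stay odd and the lower entries even, yet they all have the same odd absolute value, which forces the lower entries to be $0$. That is exactly the cleared column we want.

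Assembling: embed each such block operation as $f$-type extension (pad with an identity block in rows/columns $1,\ldots,j-1$) so it is a good $n\times n$ matrix, multiply on the left, and move to column $j+1$. After processing all $d$ columns we have zeroed every entry below the diagonal, and the product of all the good matrices used is the desired $A$.

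I expect the main obstacle to be bookkeeping the parity invariant correctly through the induction: one must verify that after clearing column $j$, the entries below the diagonal in the remaining columns $j+1,\ldots,d$ are still even and the diagonal entries still odd, so that the same argument applies to column $j+1$. This needs the observation that left-multiplication by a good matrix preserves ``sub-diagonal entries even, diagonal entries odd'' — which is immediate from Lemma~\ref1(iii) for square matrices and extends to the rectangular case by padding $X$ to a square matrix (or by direct inspection of how a step matrix acting on rows $j,\ldots,n$ affects entries in columns $>j$). The gcd-is-odd point is the other place to be careful, but it is forced simply because the gcd divides the odd entry $v_j$.
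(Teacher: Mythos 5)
Your proof is correct and takes essentially the same approach as the paper's: the paper sets it up as an induction on $d$, clearing the first column by Lemma~\ref1(ii) (with the parity constraint from Lemma~\ref1(iii) forcing the sub-diagonal entries to vanish since the gcd is odd), then peeling off the first row and column and recursing, embedding the recursive good matrix via the identity-padding trick. Your column-by-column iteration with the ``act only on rows $j,\ldots,n$'' padding is the same argument unrolled, including the key observations that the relevant gcd is odd and that the already-cleared columns are undisturbed because their entries in rows $\ge j$ are zero.
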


\begin{proof}
We proceed by induction on $d$. For $d=1$, $X$ is a column vector, so we can apply Lemma~\ref1(ii) to obtain a good matrix $A$ with all nonzero entries of $AX$ having the same absolute value. It follows from the parity conditions in Lemma~\ref1(iii) that the first entry of $AX$ is odd, and all other entries are even. This implies that the even entries have to be $0$, and we are done.
\smallskip

Consider now the case $d>1$, assuming that we have already proved the lemma for matrices with fewer than $d$ columns. First, we apply the case $d=1$ of the lemma to the first column $Xf_1$ of the matrix $X$. (Here $f_1$ denotes the first standard basis vector of $\R^d$.) We obtain a good matrix $A$ such that all entries of $AXf_1$, the first column of $AX$, are zero, except the first entry.

Let $Y$ stand for the matrix obtained from $AX$ by removing its first column and first row. Clearly, $Y$ is an $(n-1)\times(d-1)$ matrix that also satisfies the parity assumption in the lemma, as $Y$ and $AX$ agree modulo $2$. Applying the inductive hypothesis to $Y$, we find a good $(n-1)\times(n-1)$ matrix $B'$ such that all entries below the diagonal in $B'Y$ are zero. Extend $B'$ with a new first row and column containing only zero entries, except a $1$ at its top left position. By Lemma~\ref1(iii), the resulting matrix $B$ is also a good matrix. We finish the proof by observing that $BA$ is good and all entries of $BAX$ below the diagonal are zero.
\end{proof}

\section{Proof of Theorem~\ref{fo}}\label{sec4}

Condition 2 readily implies condition 1. Indeed, if the set of matrices $\{PA\mid A \hbox{ is good}\}$ is dense in the set of all $d\times n$ real matrices, then the set of their first columns (which are of the form $Pv$ for an integer vector $v$, namely, the first column of a good matrix $A$) is dense in $\R^d$.
\medskip

To prove that condition 1 implies condition 2,
we use induction on $d$. The basis of the induction is the degenerate case $d=0$. Clearly, the ``empty matrix'' (of size $0\times n$) satisfies both conditions in the theorem. Assume now that $d\ge1$, and that the implication holds for smaller values of $d$.

Let us fix a $d\times n$ real matrix $P$ that satisfies condition~1. It follows from Observation~\ref{obs1} that $n>d$. We call an integer vector \emph{primitive} if the greatest common divisor of its coordinates is $1$.

\begin{claim}\label{claim1}
There exists a primitive vector $v\in\Z^n$ such that $Pv$ nonzero, but arbitrarily short and all coordinates of $v$ are even, except for its last coordinate which is 1 modulo 4.
\end{claim}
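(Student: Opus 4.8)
The plan is to replace the ``$\equiv 1\pmod 4$'' requirement by a parity requirement, observe that the resulting coset of a finite–index subgroup of $\Z^n$ has dense image under $P$, and then turn a short nonzero vector of that coset into a primitive one by an elementary sieve.

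First, it suffices to produce a primitive $v\in\Z^n$ with $Pv\ne 0$, $\|Pv\|<\varepsilon$, whose first $n-1$ coordinates are even and whose last coordinate is odd: if such a $v$ has $v_n\equiv 3\pmod 4$, then $-v$ still has all these properties and has last coordinate $\equiv 1\pmod 4$. Write $C:=e_n+2\Z^n$ for the set of such (not necessarily primitive) vectors. Since $G(P)$ is dense, $P(2\Z^n)=2G(P)$ is dense, hence $P(C)=p_n+P(2\Z^n)$ is dense in $\R^d$; in particular there are infinitely many $w\in C$ with $0<\|Pw\|<\varepsilon/4$. So the whole difficulty is to arrange that $w$ can be taken primitive.

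Fix such a $w$. By a pigeonhole (or Minkowski) argument on lattice points in a large box — here $n>d$ is used — choose a nonzero $\delta'\in\Z^n$ with $\|P\delta'\|$ as small as we please and with $\|\delta'\|_\infty$ bounded polynomially in $1/\|P\delta'\|$; after dividing by the gcd of its coordinates assume $\delta'$ is primitive, and set $\delta:=2\delta'\in 2\Z^n$. Consider the candidates $v=w+k\delta\in C$ for $|k|\le T$, where $T\asymp\varepsilon/\|P\delta\|$, so that $\|Pv\|\le\|Pw\|+T\|P\delta\|<\varepsilon$. If $w$ and $\delta$ are parallel then $w=a\delta'$ with $a$ odd (because $w_n$ is odd), and $v:=\delta'=w-\tfrac{a-1}{2}\delta\in C$ is already primitive with $Pv=P\delta'\ne 0$ (else $Pw=aP\delta'=0$); so assume $w,\delta$ are independent, and let $g$ be the gcd of the $2\times 2$ minors of the matrix $[\,w\mid\delta\,]$, a positive integer of size $O(\|w\|_\infty\|\delta\|_\infty)$. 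The prime $2$ never divides $w+k\delta$, since its last coordinate $w_n$ is odd. For an odd prime $p$: if $p\nmid g$ then $[\,w\mid\delta\,]$ has rank $2$ modulo $p$, so $w+k\delta\not\equiv 0\pmod p$ for every $k$; if $p\mid g$ then, since $\delta\not\equiv 0\pmod p$ (because $\delta'$ is primitive and $p$ is odd), $[\,w\mid\delta\,]$ has rank exactly $1$ modulo $p$, and $w+k\delta\equiv 0\pmod p$ forces $k$ into a single residue class modulo $p$. Thus $v=w+k\delta$ is primitive precisely when $k$ avoids one residue class modulo $p$ for each odd $p\mid g$, and the Legendre sieve gives at least $(2T+1)\prod_{p\mid g}(1-\tfrac1p)-2^{\omega(g)}$ admissible $k\in[-T,T]$. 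Since $g$ grows only polynomially in $1/\|P\delta'\|$ — so $\prod_{p\mid g}(1-1/p)$ decays at most like $1/\log\log(1/\|P\delta'\|)$ and $2^{\omega(g)}$ grows only like $(1/\|P\delta'\|)^{o(1)}$ — whereas $T$ grows like $1/\|P\delta'\|$, choosing $\|P\delta'\|$ small enough makes this count exceed $2$; discarding the at most one $k$ with $P(w+k\delta)=0$ leaves an admissible $k$ for which $v:=w+k\delta$ is the desired primitive vector.

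The crux, and the step I expect to be the main obstacle, is this primitivity upgrade: a short–image vector of $C$ need not be anywhere near primitive, and the construction works only if the quantifiers are ordered correctly — $\delta$ must be chosen after $w$, yet small enough in image that the ``sieve dimension'' $g$ it induces stays dominated by the number $T$ of available translates. Its ingredients (a Minkowski/pigeonhole estimate producing small–image integer vectors of controlled height, and the elementary Legendre sieve) are standard; the density of $P(C)$ and the mod‑$4$ normalization above are routine.
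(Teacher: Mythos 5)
Your argument is probably salvageable, but it takes an enormous detour around a one-line observation and then declares that detour the crux of the problem. In fact you already spotted the key fact in your ``parallel'' side case: since $w\in e_n+2\Z^n$ has odd last coordinate, the gcd $a$ of its coordinates is \emph{odd}. This is not a degenerate edge case to dispose of --- it settles primitivity outright for every such $w$. Dividing an even integer by an odd divisor yields an even integer, and dividing an odd integer by an odd divisor yields an odd integer; hence $w/a$ retains the required parity pattern (first $n-1$ coordinates even, last coordinate odd). Also $P(w/a)=Pw/a$ is still nonzero and has norm at most $\|Pw\|$. So a short-image $w\in e_n+2\Z^n$ can be made primitive simply by dividing by the gcd of its coordinates, and the entire pigeonhole/Minkowski construction of $\delta'$, the $2\times 2$-minor gcd $g$, the height bound $\|\delta'\|_\infty\lesssim\|P\delta'\|^{-d/(n-d)}$, and the Legendre sieve become unnecessary. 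This is exactly what the paper does: choose $v_0\in\Z^n$ with $Pv_0$ close to, but distinct from, $Pe_n/2$; set $v_1=2v_0-e_n$ (nonzero, arbitrarily short image, correct parities); divide by the odd gcd to get a primitive $v_2$; flip the sign if necessary to force the last coordinate $\equiv 1\pmod 4$. Your opening reductions (the coset $e_n+2\Z^n$, density of $P$ on that coset, and the sign trick for the mod-$4$ normalization) agree with the paper; only the primitivity step is overcomplicated, by overlooking the generality of a trick you had already used.
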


\begin{proof}
    By our assumptions on $P$, we can find a vector $v_0\in\Z^n$ such that $Pv_0$ is distinct from, but arbitrarily close to $Pe_n/2$, where $e_n$ is the last standard basis vector in $\R^n$. Set $v_1=2v_0-e_n$. Clearly, $Pv_1$ is nonzero, but arbitrarily short, while all coordinates of $v_1$ are even except for the last one, which is odd. Let $k$ be the greatest common divisor of the coordinates of $v_1$, this is an odd integer. Clearly, $v_2=v_1/k$, is a primitive vector with all its coordinates even except the last one that is odd. We have $Pv_2=Pv_1/k$, so it is nonzero and can be made arbitrarily short. One of $v=v_2$ or $v=-v_2$ satisfies all the requirements and establishes the claim.
\end{proof}

Applying Lemma~\ref1(ii) to $v$, we immediately obtain that there exists a good $n\times n$ matrix $A$ such that all entries of $Av$ are equal to $\pm1$ or $0$. It follows from the parity and modulo 4 constraints that the only possibility is that $Av$ is equal to $e_n,$ the last standard basis vector in $\R^n$ (cf. Lemma~\ref1(iii)). Thus, we have
\begin{claim}\label{claim2}
There exists a good $n\times n$ matrix $A$ such that $Av=e_n$
\end{claim}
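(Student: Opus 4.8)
The plan is to read off Claim~\ref{claim2} from Claim~\ref{claim1} by combining Lemma~\ref1(ii) with the explicit description of good matrices in Lemma~\ref1(iii). I would take the vector $v\in\Z^n$ produced by Claim~\ref{claim1}: it is primitive, every coordinate of $v$ is even except the last one, which is congruent to $1$ modulo~$4$, and $Pv$ is nonzero (though that last fact is not needed here). Since $v$ is primitive, the greatest common divisor of its entries is $1$, so Lemma~\ref1(ii) supplies a good $n\times n$ matrix $A$ with every entry of $Av$ lying in $\{-1,0,1\}$. The remaining task is to show that the parity and congruence constraints on $v$ force $Av=e_n$ exactly.

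First I would use the mod-$2$ information. By Lemma~\ref1(iii) a good matrix has even off-diagonal entries and odd diagonal entries, so $A\equiv I\pmod 2$ and therefore $(Av)_i\equiv v_i\pmod 2$ for every $i$. For $i<n$ the coordinate $v_i$ is even, so $(Av)_i$ is an even element of $\{-1,0,1\}$, hence $(Av)_i=0$; for $i=n$ the coordinate $v_n$ is odd, so $(Av)_n\in\{-1,1\}$.

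The one genuine subtlety is ruling out $(Av)_n=-1$, and this is exactly why Claim~\ref{claim1} was stated with the last coordinate congruent to $1$ modulo~$4$ rather than merely odd. I would refine the previous computation modulo~$4$: writing $(Av)_n=A_{nn}v_n+\sum_{j<n}A_{nj}v_j$, Lemma~\ref1(iii) gives $A_{nn}\equiv 1\pmod 4$, and we have $v_n\equiv 1\pmod 4$, so $A_{nn}v_n\equiv 1\pmod 4$; for each $j<n$ both $A_{nj}$ and $v_j$ are even, so $A_{nj}v_j\equiv 0\pmod 4$. Hence $(Av)_n\equiv 1\pmod 4$, which together with $(Av)_n\in\{-1,1\}$ yields $(Av)_n=1$. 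Combining the two paragraphs, $Av=e_n$.

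I do not expect any serious obstacle beyond keeping track of the congruences; the only point requiring care is the sign of the last coordinate. It is worth remarking that $-e_n$ is genuinely out of reach from $e_n$ via good matrices — the $(n,n)$ entry of any good matrix is congruent to $1$ modulo~$4$ and so is never $-1$ — which confirms that the mod-$4$ refinement carried over from Claim~\ref{claim1} is essential rather than a mere convenience.
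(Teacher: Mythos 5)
Your proof is correct and follows the same route as the paper: apply Lemma~\ref1(ii) to the primitive vector $v$ to get $Av$ with entries in $\{-1,0,1\}$, then use the mod-$2$ and mod-$4$ structure of good matrices from Lemma~\ref1(iii) to pin down $Av=e_n$. The paper states the congruence step tersely ("It follows from the parity and modulo 4 constraints..."), whereas you helpfully spell out the mod-$2$ and mod-$4$ computations; the content is identical.
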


Choose an orthogonal transformation $O$ of $\R^d$ that moves $Pv$ to a vector parallel to $f_d$, the last standard basis vector in $\R^d$. So, we have $$OPv=\varepsilon f_d,$$ where, according to Claim~\ref{claim1}(i), $$|\varepsilon|=||OPv||=||Pv||$$ can be made arbitrarily small, while keeping it nonzero.
\smallskip

Consider now the matrix $D=OPA^{-1}$. The last column of $D$ is $$De_n=OPA^{-1}e_n=OPv=\varepsilon f_d.$$ Let $P'$ be the the matrix formed by the first $d-1$ rows and the first $n-1$ columns of $D$, and let $w^T$ stand for the first $d-1$ entries of the last row of $D$. We have
\begin{equation}\label{eq1}
D=\begin{pmatrix}P'&0\\w^T&\varepsilon\end{pmatrix}.
\end{equation}

We claim that $P'$ satisfies condition 1 in Theorem~\ref{fo}.

\begin{claim}\label{claim3}
The set of vectors $\{P'u'\mid u'\in\Z^{n-1}\}$ is dense in $\R^{d-1}$.
\end{claim}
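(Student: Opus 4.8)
The plan is to deduce density of $\{P'u'\mid u'\in\Z^{n-1}\}$ in $\R^{d-1}$ from the assumed density of $G(P)=\{Pv\mid v\in\Z^n\}$ in $\R^d$, by exploiting the block structure in~\eqref{eq1}. The key observation is that $D=OPA^{-1}$ differs from $P$ only by left multiplication by an invertible (in fact orthogonal) transformation and right multiplication by a good, hence integer-invertible, matrix. Since $A^{-1}$ is an integer matrix whose columns form a basis of $\Z^n$, we have $\{Dv\mid v\in\Z^n\}=\{OPA^{-1}v\mid v\in\Z^n\}=O\{Pv\mid v\in\Z^n\}=OG(P)$, and an orthogonal transformation sends a dense set to a dense set; so $\{Dv\mid v\in\Z^n\}$ is dense in $\R^d$.

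Now I would use the block form. Write an arbitrary integer vector as $v=\binom{u'}{m}$ with $u'\in\Z^{n-1}$ and $m\in\Z$. Then $Dv=\binom{P'u'}{\,w^Tu'+\varepsilon m\,}$. Fix a target $x\in\R^{d-1}$ and $\delta>0$. By density of $\{Dv\}$ in $\R^d$, there is an integer vector $v=\binom{u'}{m}$ with $Dv$ within $\delta$ of $\binom{x}{0}$ (say); in particular $\|P'u'-x\|<\delta$. This already shows that the projection of the dense set $\{Dv\}$ onto the first $d-1$ coordinates — which is exactly $\{P'u'\mid u'\in\Z^{n-1}\}$, since the last coordinate $m$ ranges freely and does not affect $P'u'$ — is dense in $\R^{d-1}$. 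Indeed, the continuous projection map $\R^d\to\R^{d-1}$ onto the first $d-1$ coordinates is an open surjection, so it carries dense sets to dense sets, and it maps $\{Dv\mid v\in\Z^n\}$ onto $\{P'u'\mid u'\in\Z^{n-1}\}$.

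The only point requiring a little care is the claim that the image of $\{Dv\}$ under the coordinate projection is precisely $\{P'u'\}$ and not something smaller: this is immediate from $Dv=\binom{P'u'}{w^Tu'+\varepsilon m}$, since the first $d-1$ coordinates depend only on $u'$, which ranges over all of $\Z^{n-1}$ as $v$ ranges over $\Z^n$. I do not expect a genuine obstacle here; the essential content has already been packaged into the observation that left multiplication by $O$ and right multiplication by the good matrix $A^{-1}$ preserve density of the generated group, plus the trivial fact that coordinate projections are open maps. This establishes Claim~\ref{claim3}, after which the inductive hypothesis of Theorem~\ref{fo} can be applied to the $(d-1)\times(n-1)$ matrix $P'$.
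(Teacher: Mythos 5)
Your proposal is correct and follows essentially the same route as the paper: first observe that right multiplication by $A^{-1}\in SL(n,\Z)$ and left multiplication by the orthogonal $O$ preserve density of $\{Pv\mid v\in\Z^n\}$, so $\{Dv\mid v\in\Z^n\}$ is dense in $\R^d$, and then read off the first $d-1$ coordinates from the block form of $D$. The only stylistic difference is that you make the projection step explicit (an open surjection carries dense sets to dense sets), whereas the paper asserts it directly; you also correct the paper's harmless typo of $\Z^d$ where $\Z^n$ is meant.
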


\begin{proof}
    We have $\{PA^{-1}u\mid u\in\Z^d\}=\{Pu\mid u\in\Z^d\},$ because $A$ is in $SL(n,\Z)$. By assumption, the letter set is dense in $\R^d$. Therefore, the image of this set under an orthogonal transformation, $$\{OPA^{-1}u\mid u\in\Z^d\}=\{Du\mid u\in\Z^d\},$$ is also dense. The first $d-1$ coordinates of $Du$ are given by $P'u'$, where $u'$ stands for the vector formed by the first $n-1$ coordinates of $u$. Thus, $\{P'u'\mid u'\in\Z^{n-1}\}$ has to be dense in $\R^{d-1}$,
\end{proof}

From Claim~\ref{claim3} and the induction hypothesis, we conclude that $P'$ satisfies condition~2 of Theorem~\ref{fo}. In other words, we have the following.

\begin{claim}\label{claim4}
The set of matrices $\{P'B'\mid B' \hbox{ is good}\}$ is dense among the $(d-1)\times(n-1)$ real matrices.
\end{claim}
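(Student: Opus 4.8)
The plan is to obtain Claim~\ref{claim4} as a direct application of the induction hypothesis underlying the proof of Theorem~\ref{fo}. Recall that this proof runs by induction on the number of rows $d$, and that the assertion of Claim~\ref{claim4} is, word for word, condition~2 of Theorem~\ref{fo} for the $(d-1)\times(n-1)$ matrix $P'$. Since $P'$ has $d-1<d$ rows, the inductive hypothesis---the implication (condition 1) $\Rightarrow$ (condition 2) for every matrix with at most $d-1$ rows---is at our disposal for $P'$.

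Hence the single thing that needs checking is that $P'$ satisfies condition~1 of Theorem~\ref{fo}, i.e.\ that $G(P')=\{P'u'\mid u'\in\Z^{n-1}\}$ is dense in $\R^{d-1}$; and this is precisely Claim~\ref{claim3}, which has already been established. One should record en route that the relevant dimensions lie in the admissible range: from $n>d\ge1$ we get $n-1\ge1$, while $d-1\ge0$, the value $d-1=0$ being exactly the degenerate base case of the induction (the empty $0\times(n-1)$ matrix, which trivially satisfies both conditions). With these verifications in place, the inductive hypothesis yields that $\{P'B'\mid B'\text{ is good}\}$ is dense among the $(d-1)\times(n-1)$ real matrices, which is Claim~\ref{claim4}.

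I do not expect any genuine obstacle in this step: it is a bookkeeping step that merely packages the conclusion of the inductive case for later use, and the only thing that could conceivably go wrong---an out-of-range dimension---is ruled out by the parenthetical remark above. The real difficulties of the argument sit on either side of it: upstream, in producing the short primitive vector $v$ with the prescribed parity (Claims~\ref{claim1}--\ref{claim2}) and in reducing $D=OPA^{-1}$ to the block form~\eqref{eq1} via a suitable orthogonal $O$; and downstream, in recombining the good matrices $B'$ supplied by Claim~\ref{claim4} with powers of step matrices acting on the last row so as to approximate an arbitrary $d\times n$ matrix by $PA$ with $A$ good, simultaneously controlling the block $P'B'$, the row $w^T$, and the tiny scalar $\varepsilon$. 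That recombination, not Claim~\ref{claim4}, is where the work lies.
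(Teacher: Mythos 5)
Your argument is exactly the paper's: Claim~\ref{claim4} is obtained by applying the induction hypothesis of the proof of Theorem~\ref{fo} (on the number of rows $d$) to the matrix $P'$, whose hypothesis---condition~1 for $P'$---is supplied by Claim~\ref{claim3}. The dimension bookkeeping you add is harmless and consistent with the paper's base case $d=0$.
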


Given a \emph{good} $(n-1)\times(n-1)$ matrix $B'$ and a vector $y\in\Z^{n-1}$ whose every coordinate is \emph{even}, let us construct an $n\times n$ matrix $B$, as follows.
\begin{equation}\label{eq2}
B=B(B',y)=\begin{pmatrix}B'&0\\y^T&1\end{pmatrix}.
\end{equation}
Using the characterization of good matrices given in Lemma~\ref1(iii), we obtain
\begin{claim}\label{claim5}
The matrix $B$ defined in~{(\ref{eq2})} is a good matrix. Hence, $A^{-1}B$, the product of two good matrices, is also good.
\end{claim}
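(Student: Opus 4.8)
The plan is to read off the three arithmetic conditions characterizing good matrices in Lemma~\ref1(iii) directly from the block form~(\ref{eq2}) of $B=B(B',y)$, and then apply that characterization.

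First I would compute $\det(B)$ by expanding along the last column of~(\ref{eq2}): its only nonzero entry is the $1$ in the bottom-right corner, so $\det(B)=\det(B')$, and $\det(B')=1$ because $B'$ is good. Next I would verify that every off-diagonal entry of $B$ is an even integer. These entries fall into three groups: the off-diagonal entries of the top-left block, which are exactly the off-diagonal entries of $B'$ and hence even since $B'$ is good; the entries of the last column above the corner, which are all $0$; and the entries of the last row to the left of the corner, which are the coordinates of $y$, even by hypothesis. Finally, the diagonal entries of $B$ are the diagonal entries of $B'$ (each $\equiv1\pmod4$ as $B'$ is good) together with the corner entry $1\equiv1\pmod4$. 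Thus $B$ satisfies all three conditions of Lemma~\ref1(iii), so $B$ is good. Since $A$ is good and the good matrices form a group, $A^{-1}$ is good, whence $A^{-1}B$ is good as a product of two good matrices.

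I do not expect a real obstacle here: the argument is a routine check against Lemma~\ref1(iii). The only subtleties worth flagging are that the hypothesis that every coordinate of $y$ is even is precisely what is needed for the last-row entries to satisfy the off-diagonal parity condition, and that using the full strength of ``$B'$ is good'' (not merely, say, $\det B'=1$) is what supplies both the determinant value and the congruence condition on the diagonal.
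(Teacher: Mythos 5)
Your proof is correct and is exactly the argument the paper intends: the paper simply invokes Lemma~\ref1(iii) without spelling out the routine block-by-block verification, which you have carried out carefully. Nothing more is needed.
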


From (\ref{eq1}) and (\ref{eq2}), we get
\begin{equation}\label{eq3}
DB=\begin{pmatrix}P'B'&0\\w^T B'+\varepsilon y^T&\varepsilon\end{pmatrix}.
\end{equation}

Consider now a $d\times n$ real matrix $X$ whose last column is $0$. Then the last column of $OX$ is also $0$. By Claim~\ref{claim4}, selecting a suitable $(n-1)\times(n-1)$ good matrix $B'$, we can attain that the first $d-1$ rows of $DB$ are arbitrarily close to the first $d-1$ rows of $OX$, where $B=B(B',y)$ and $y$ is arbitrary. Then we can select a suitable even vector $y\in\Z^{n-1}$ to achieve that every coordinate of the last row of $DB$ is $|\varepsilon|$-close to the last row of $OX$. Summarizing:

\begin{claim}\label{claim6}
For every $\delta>|\varepsilon|$, there exists a good $n\times n$ matrix $B$ such that every column of $DB=OPA^{-1}B$  is $\delta$-close to the corresponding column of $OX$.

Therefore, every column of $PA^{-1}B$ is $\delta$-close to the corresponding column of $X$. Note that $|\varepsilon|$ and hence $\delta$where $\delta$ can be made arbitrarily small.
\end{claim}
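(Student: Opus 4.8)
The claim records the outcome of the construction carried out in the paragraph just above, so proving it amounts to making the columnwise error accounting explicit: one chooses the two ingredients $B'$ and $y$ of $B=B(B',y)$ in the correct order and tracks the errors through the block identity~(\ref{eq3}). I would first rewrite the target in the rotated frame. Since the last column of $X$ vanishes, so does the last column of $OX$; writing
$$OX=\begin{pmatrix}Z'&0\\z^T&0\end{pmatrix}$$
with $Z'$ a $(d-1)\times(n-1)$ block and $z^T$ a row of length $n-1$, and comparing with~(\ref{eq3}), we see that the last column of $DB$ is $\varepsilon f_d$, which is $|\varepsilon|<\delta$ away from the last column $0$ of $OX$. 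Hence only the first $n-1$ columns require work, that is, we must arrange $P'B'\approx Z'$ in the top-left block and $w^TB'+\varepsilon y^T\approx z^T$ in the bottom-left block.

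Next, I would choose $B'$ first and $y$ afterwards. By Claim~\ref{claim4}, pick a good $(n-1)\times(n-1)$ matrix $B'$ so that each column of $P'B'$ lies within $\sqrt{\delta^2-\varepsilon^2}$ of the corresponding column of $Z'$; this is legitimate precisely because $\delta>|\varepsilon|$, so $\delta^2-\varepsilon^2>0$. With $B'$ fixed, the row vector $w^TB'$ is determined, hence $(z^T-w^TB')/\varepsilon$ is a well-defined real row vector (recall $\varepsilon\neq0$). I then pick $y\in\Z^{n-1}$ with every coordinate even and within $1$ of the corresponding coordinate of $(z^T-w^TB')/\varepsilon$, which is possible because consecutive even integers differ by $2$. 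Then every coordinate of $w^TB'+\varepsilon y^T$ is within $|\varepsilon|$ of the corresponding coordinate of $z^T$, and by Claim~\ref{claim5} the matrix $B=B(B',y)$ is good.

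It remains to collect the bounds and transport them back through $O$. For a column index $j<n$, the difference between the $j$th columns of $DB$ and $OX$ is a vector of $\R^d$ whose first $d-1$ coordinates have Euclidean norm less than $\sqrt{\delta^2-\varepsilon^2}$ and whose last coordinate has absolute value at most $|\varepsilon|$; hence its norm is less than $\sqrt{(\delta^2-\varepsilon^2)+\varepsilon^2}=\delta$. Together with the last column, which was already handled, this shows every column of $DB$ is $\delta$-close to the corresponding column of $OX$. Since $D=OPA^{-1}$ and $O$ is orthogonal, $PA^{-1}B=O^{-1}(DB)$ and $X=O^{-1}(OX)$ are obtained by applying the distance-preserving map $O^{-1}$ columnwise, so every column of $PA^{-1}B$ is $\delta$-close to the corresponding column of $X$. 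Finally, $\delta$ can be taken arbitrarily small: by Claim~\ref{claim1} the quantity $\|Pv\|=|\varepsilon|$ can be made as small as we wish, and then any positive $\delta>|\varepsilon|$ is admissible.

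I do not expect a genuine obstacle in this step; the difficulty of Theorem~\ref{fo} is concentrated in the earlier claims — the characterization of good matrices in Lemma~\ref1(iii), the extraction of the short primitive vector $v$ in Claim~\ref{claim1}, and the inductive construction of $P'$. The two points that need care here are the order of the quantifiers (the shift forced in the bottom-left block depends on $w^TB'$, so $B'$ must be committed before $y$) and the restriction of $y$ to even entries: the latter is what leaves the bottom-left and bottom-right errors as large as $|\varepsilon|$, and is the reason Claim~\ref{claim6} is stated with the margin $\delta>|\varepsilon|$ rather than for an arbitrary prescribed tolerance — the slack $\delta-|\varepsilon|>0$ is exactly what absorbs the top-block approximation error coming from Claim~\ref{claim4}.
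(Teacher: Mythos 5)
Your proof is correct and takes essentially the same route as the paper: choose $B'$ via Claim~\ref{claim4} to control the top $(d-1)$ rows, then choose the even vector $y$ to bring the bottom row within $|\varepsilon|$, and finally pull back through the orthogonal map $O$. You simply make explicit the columnwise error accounting (splitting the $\delta$ budget into a $\sqrt{\delta^2-\varepsilon^2}$ part for the top block and an $|\varepsilon|$ part for the last row) and the quantifier order ($B'$ before $y$), both of which the paper leaves implicit.
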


The second statement of the claim directly follows from the first one, because $O$ is an orthogonal transformation of $\R^d$.
\smallskip

To prove Theorem~\ref{fo}, we need to show that for a fixed $d\times n$ real matrix $P$ which satisfies condition 1, the set of matrices $\{PC\mid C \hbox{ is good}\}$ is dense among all $d\times n$ real matrices.
In Claim~\ref{claim6}, we have shown that every $d\times n$ real matrix $X$ whose last column is zero can be approximated arbitrarily closely by matrices of the form $PC$, where $C$ is good. In the rest of the proof, we will get rid of the assumption that the last column of $X$ must be zero.
\smallskip

From now on, let $X$ denote an arbitrary $d\times n$ real matrix $X$, and let $\varepsilon>0$.

First, we approximate $(1/\varepsilon)X^T$ by an $n\times d$ integer matrix $Y$ with odd entries in the main diagonal and even entries in the non-diagonal positions. By Observation~\ref{obs1}, we have that $n>d$, so all entries of the last $n-d$ rows of $(1/\varepsilon)X^T$ are below the diagonal. Clearly, the approximating matrix $Y$ can be chosen in such a way that every entry of it is within $1$ of the matrix $(1/\varepsilon)X^T$.
By Lemma~\ref{gen}, there exists a good $n\times n$ matrix $A_0$ such that all entries of $A_0Y$ below the diagonal are zero, so, in particular, the last row is zero.
\smallskip

Consider now the $d\times n$ matrix $X_0=\varepsilon Y^TA_0^T$. Its last column is zero. Therefore, our theorem is true for $X_0$, which means that $X_0$ can be arbitrarily closely approximated by a matrix of the form $PB$, where $B$ is good. Equivalently, $PB(A_0^T)^{-1}$ can approximate $X_0(A_0^T)^{-1}=\varepsilon Y^T$ arbitrarily well. Here $A_0$ and $B$, hence also $C=B(A_0^T)^{-1}$ are good matrices.

Each entry of $\varepsilon Y^T$ is $\varepsilon$-close to the corresponding entry of $X$. Since $\varepsilon>0$ was arbitrary, we can conclude that $X$ can be approximated arbitrarily closely by matrices of the form $PC$, where $C$ is good. This completes the induction and, hence, the proof of Theorem~\ref{fo}.

\section{Proof of Theorem~\ref2}\label{sec5}

In an earlier paper~\cite{PaT24}, we gave a linear algebraic description of all configurations $(0,q_1,\ldots,q_n)$ in $\R^d$, that can be reached from the initial configuration $(0,p_1,\ldots,p_n)$ by legal moves, under the assumption that particle $0$ at the origin never moves. We call this particle \emph{stationary}.

As before, we identify every such configuration with the $d\times n$ matrix whose $i$\/th column consists of the coordinates of the $i$\/th particle, $1\le i\le n.$ To describe the set of reachable configurations, in~\cite{PaT24} we introduced some simple $n\times n$ matrices, called \emph{elementary matrices} (or \emph{elementary involutions}).
\smallskip

There are two types of elementary matrices. An elementary matrix of the \emph{first type} is a matrix $A_i$ that can be obtained from the identity matrix by replacing its $i$\/th diagonal entry by $-1$, for $1\le i\le n.$ $A_i$ corresponds to moving particle $i$ over the stationary particle sitting at the origin. If $P$ is the matrix of the original configuration, then after this move, we obtain a configuration whose matrix is $PA_i$.

An elementary matrix of the \emph{second type}, is a matrix $A_{ij}\;(1\le i\neq j \le n)$ that can be obtained from $A_i$ by replacing the $j$\/th entry of its $i$\/th column by $2$. The matrix $A_{ij}$ corresponds to moving particle $i$ over particle $j$. As before, we get the matrix of the resulting configuration from that of the original one by multiplying it with $A_{ij}$.

For example, for $n=4$, we have
\begin{equation}\label{matrix}
A_{2}=\begin{bmatrix}
1&0&0&0\\
0&-1&0&0\\
0&0&1&0\\
0&0&0&1
\end{bmatrix}\quad  \mbox{and} \quad
A_{24}=\begin{bmatrix}
1&0&0&0\\
0&-1&0&0\\
0&0&1&0\\
0&2&0&1
\end{bmatrix},
\end{equation}
where the first matrix corresponds to the jump of particle $2$ over the stationary particle $0$, and the second matrix to the jump of $2$ over the particle $4$.

\begin{definition}\label{def3}
\emph{An $n\times n$ matrix is called \emph{admissible} if it can be obtained as a product of elementary matrices.}
\end{definition}

Since every elementary matrix is its own inverse, all admissible matrices are invertible.
\smallskip

In~\cite{PaT24}, we established the following lemma.

\begin{lemma}\label{altalanos}
Let $P$ be a $d\times n$ matrix associated with the initial configuration of $n+1$ particles in ${\mathbf{R}}^d$, one of which is sitting at the origin $0$.

(i) The set of configurations that can be reached from $P$ by a sequence of legal moves that keep the stationary particle at the origin, is $\{PA\mid A \hbox{ is admissible}\}$.

(ii) If we drop the assumption that the stationary particle must stay at the origin and allow it to jump over any other particle, then a configuration can be reached from $P$ if and only if it can be obtained from a configuration that can be reached without moving the $0$\/th particle by translating it by a vector of the form $2Pw$, for some $w\in\Z^n.$
\end{lemma}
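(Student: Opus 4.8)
The plan is to reduce everything to the single observation that a legal move acts on the \emph{relative configuration} --- the $d\times n$ matrix $R$ whose $i$th column is $p_i'-p_0'$, i.e.\ the positions of particles $1,\dots,n$ measured from particle $0$ --- by right multiplication by a fixed $n\times n$ matrix, while separately keeping track of where particle $0$ sits. For part (i), where particle $0$ never moves, the two base identities are immediate: ``particle $i$ jumps over particle $0$'' replaces the configuration matrix $P'$ by $P'A_i$ (it negates column $i$), and ``particle $i$ jumps over particle $j$'' with $i,j\neq 0$ replaces $P'$ by $P'A_{ij}$ (the $i$th column of $A_{ij}$ being $-e_i+2e_j$). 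Since each elementary matrix comes from exactly one such move, composing moves matches composing matrices, and the configurations reachable with particle $0$ fixed are precisely $\{PE_1\cdots E_k\}=\{PA:A\text{ admissible}\}$.

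For part (ii) I would analyse an arbitrary move via $R$ and the position $p_0'$ of particle $0$. A move not involving particle $0$ leaves $p_0'$ fixed and --- by the very computation of part (i), which never used that particle $0$ is at the origin --- sends $R\mapsto RA_i$ or $R\mapsto RA_{ij}$. The move ``particle $0$ jumps over particle $j$'' sends $p_0'\mapsto p_0'+2(p_j'-p_0')=p_0'+2Re_j$ and sends $R\mapsto RM_j$, where $M_j$ is the identity matrix with its $j$th row replaced by the vector having $-1$ in position $j$ and $-2$ elsewhere (equivalently, the $k$th column of $M_j$ is $e_k-2e_j$). The crucial point is that $M_j$ is admissible, which I would establish by the explicit elementary factorization $M_j=A_j\prod_{k\neq j}(A_kA_{kj})$, itself a consequence of the one-line identity $A_kA_{kj}=I+2e_je_k^{T}$ valid for $k\neq j$. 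Granting this, along any sequence of moves $R$ stays of the form $PA$ with $A$ admissible, hence $Re_j\in G(P)$ throughout, and therefore $p_0'$, which starts at $0$ and changes only by vectors $2Re_j\in 2G(P)$, always lies in $2G(P)=\{2Pw:w\in\Z^n\}$. Thus every reachable configuration equals the one with relative matrix $R$ and particle $0$ at the origin --- reachable with particle $0$ fixed by part (i) --- translated by $p_0'=2Pw$; this is the ``only if'' half of (ii).

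For the converse I must show that for every admissible $A$ and every $w\in\Z^n$ the configuration $\Phi(A,w)$ with particle $0$ at $2Pw$ and relative matrix $PA$ is reachable from $\Phi(I,0)=(0,p_1,\dots,p_n)$. By the rules above a legal move sends $\Phi(A,w)$ to $\Phi(AA_i,w)$, to $\Phi(AA_{ij},w)$, or to $\Phi(AM_j,\,w+Ae_j)$, so it suffices to navigate this parameter space. First I would reach $\Phi(I,w)$ for arbitrary $w$: writing $w=\sum_t\varepsilon_te_{j_t}$ with $\varepsilon_t=\pm1$, add the terms one at a time, each time applying ``particle $0$ jumps over particle $j_t$'' (preceded by ``particle $j_t$ jumps over particle $0$'' when $\varepsilon_t=-1$, so that the column picked up is $-e_{j_t}$) and then cancelling the admissible right factor $M_{j_t}$ (respectively $A_{j_t}M_{j_t}$) it introduced by the particle-$0$-fixed moves realizing right multiplication by its inverse; since $M_{j_t}$ and $A_{j_t}$ are involutions, these restore the relative matrix to $P$ while leaving $w$ incremented by $\varepsilon_te_{j_t}$. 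Once at $\Phi(I,w)$, apply the particle-$0$-fixed moves realizing right multiplication by $A$ to arrive at $\Phi(A,w)$. Together with the first half this yields the characterization in (ii).

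The step I expect to be the genuine obstacle is the admissibility of $M_j$: this is exactly the statement that a jump of particle $0$ over particle $j$ can be simulated --- as far as the other particles' relative positions are concerned --- by jumps that never move particle $0$, and without it the reduction of (ii) to (i) does not get started. Everything else is routine tracking of products of elementary matrices, together with the easy verification of the three base identities above.
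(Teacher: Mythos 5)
The paper does not actually prove Lemma~\ref{altalanos}: it states it and cites \cite{PaT24} for the argument, so there is no in-paper proof to compare against. Taking your proposal on its own terms, it is correct and self-contained.

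Part~(i) is the routine check that the two base moves act by right multiplication by $A_i$ and $A_{ij}$ respectively, and that this computation only uses the positions of particles $1,\dots,n$ measured from particle~$0$ (so it carries over verbatim when particle~$0$ sits at a point other than the origin), which you exploit in part~(ii). For part~(ii) you correctly identify the crux: the move ``particle $0$ jumps over particle $j$'' sends the relative matrix $R$ to $RM_j$ with $M_j=I-2e_j\mathbf{1}^T$ (equivalently, $k$th column $e_k-2e_j$ for every $k$) and sends $p_0'$ to $p_0'+2Re_j$, and one must know that $M_j$ is admissible. Your factorization works: $A_kA_{kj}=I+2e_je_k^T$ for $k\neq j$, these rank-one perturbations commute since $e_k^Te_j=0$, their product is $I+2e_j(\mathbf{1}-e_j)^T$, and premultiplying by $A_j=I-2e_je_j^T$ gives $I-2e_j\mathbf{1}^T=M_j$; I verified the expansion. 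Given admissibility of $M_j$, the ``only if'' half follows because $R$ stays of the form $PA$ with $A$ admissible and integer, so each increment $2Re_j=2P(Ae_j)$ lies in $2G(P)$; and the ``if'' half follows from your navigation scheme in the parameter space $\Phi(A,w)$, which relies only on the admissibility (hence realizability with particle~$0$ fixed) of $M_j$, $A_jM_j$ and their inverses, all of which are involutions. One small presentational point: your realization of the increment $w\mapsto w\pm e_j$ implicitly appeals to the fact that particle-$0$-fixed moves applied to a configuration with particle~$0$ at $2Pw$ still act on $R$ by right multiplication by elementary matrices; you say as much in passing (``never used that particle $0$ is at the origin''), but it is worth making explicit that this is what licenses the ``cancelling'' steps.
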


To complete the proof of Theorem~\ref2, we need another simple
\begin{lemma}\label{obs2}
Every good matrix is admissible.
\end{lemma}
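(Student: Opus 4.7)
The plan is to prove this by showing that every step matrix is itself a product of elementary matrices; once this is established, a good matrix, being by definition a finite product of step matrices, is automatically a product of elementary matrices and hence admissible.

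The key computation I would carry out first is to verify two identities at the level of elementary matrices. Recall that $A_i$ acts as the identity except for a $-1$ in position $(i,i)$, while $A_{ij}$ agrees with $A_i$ except that it also has a $2$ in the $(j,i)$-entry. I would compute the products $A_iA_{ij}$ and $A_{ij}A_i$ directly; both are easy because the only non-trivial column of $A_{ij}$ is the $i$-th, and when we multiply by $A_i$ the two copies of $-1$ on position $(i,i)$ cancel. The outcome I expect is that $A_iA_{ij}$ is the step matrix with entry $+2$ in position $(j,i)$ and $1$'s on the diagonal, while $A_{ij}A_i$ is the step matrix with entry $-2$ in position $(j,i)$; the example in~(\ref{matrix}) with $i=2$, $j=4$ is a useful sanity check.

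Since $i$ and $j$ can be any pair of distinct indices in $\{1,\dots,n\}$, the two identities above already produce every step matrix, because every step matrix is determined by a choice of off-diagonal position and a sign $\pm$ for the entry $\pm 2$. Combining this with the definition of a good matrix as a product of step matrices (and their inverses, which by the observation in Section~\ref{sec3} are again step matrices) gives the lemma.

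The main ``obstacle'' is really just bookkeeping: one has to be careful about the convention that our matrices act from the right on particle configurations, so that the product $A_iA_{ij}$ represents the move ``first jump $i$ over the origin, then jump $i$ over $j$''; once this is fixed, the proof reduces to the two matrix multiplications described above. No further induction or case analysis is needed, in contrast to the characterization of good matrices in Lemma~\ref1(iii).
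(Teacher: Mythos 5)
Your proof is correct and follows essentially the same route as the paper: reduce to showing each step matrix is a product of elementary matrices, via the identities $A_iA_{ij}$ and $A_{ij}A_i$ giving the two signs (the paper merely illustrates this with the $4\times 4$ example in~(\ref{matrix}) rather than writing out the general computation). Your handling of inverses of step matrices matches the paper's observation in Section~\ref{sec3}, so no gap remains.
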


\begin{proof}
Since every good matrix is a product of step matrices, it is enough to verify that every step matrix can be written as a product of elementary matrices. We illustrate the easy proof by an example. The $4\times 4$ step matrix $M$ obtained from the identity matrix by replacing the 2nd entry of its last row by $-2$ can be written as $M=A_{24}A_2$, where $A_{24}$ and $A_2$ are the elementary matrices shown in (\ref{matrix}). Hence, for the matrix $M'$ obtained from the identity matrix by replacing the 2nd entry of its last row by $+2$, we have $M'=M^{-1}=A_2^{-1}A_{24}^{-1}=A_2A_{24}$ 
\end{proof}

\begin{proof}[Proof of Theorem~\ref2] As we pointed out in the Introduction, condition 1 of the theorem trivially implies condition 2. In every configuration that can be reached from $P$, each particle remains in $G(P)$. Thus, if any configuration can be approximated arbitrarily closely by configurations reachable from $P$, then $G(P)$ must be dense in $\R^d$.
\medskip

Next we prove that condition 2 implies condition 1. Let us fix a configuration $(0,p_1,\dots,p_n),$ identified with the $d\times n$ matrix $P$ having $p_i$ as its $i$\/th column. Suppose that $G(P)$ is dense in $\R^d$. We have to prove that any configuration $(q_0,q_1,\dots,q_n)$ of $n+1$ points in $\R^d$ can be approximated arbitrarily closely by configurations reachable from $P$.
\smallskip

Let us fix such a configuration $(q_0,q_1,\dots,q_n),$ and  let $\varepsilon>0$ be an arbitrarily small constant. According to our assumption (condition 2),
$$G(P)=\{Pw\mid w\in\Z^d\}$$ is dense in $\R^d$, so we can find a vector $w_0\in \Z^d$ such that $Pw_0$ is $\varepsilon/2$-close to $q_0/2$.

By Theorem~\ref{fo}, $\{PA\mid A \hbox{ is good}\}$ is dense among all $d\times n$ real matrices. Hence, we can choose a good matrix $A_0$ such that $PA_0$ is $\varepsilon$-close to the matrix $Q$, whose $i$\/th column is $q_i-q_0$, for $1\le i\le n.$ By Lemma~\ref{obs2}, $A_0$ is not only good, but also admissible.

It follows from Lemma~\ref{altalanos}(ii) that the configuration associated with the matrix $PA_0$, translated by the vector $2Pw_0$, can be reached from $P$ by a sequence of legal moves. Moreover, it is $\varepsilon+\varepsilon=2\varepsilon$-close to the configuration $(q_0,q_1,\ldots,q_n).$ \end{proof}

\section{Concluding remarks}\label{sec6}

1. According to Lemma~\ref{obs2}, the group generated by the $n\times n$ step matrices is a subgroup of the group generated by the elementary matrices (involutions) of the same size. It is easy to see that the index of this subgroup is $2^n$. The characterization of this subgroup, given in part (iii) of Lemma~\ref1 is similar to the characterization of admissible matrices given in \cite{PaT24}.
\smallskip

\noindent 2. Our approach heavily uses the fact that reversing a legal move we obtain another legal move. Therefore, the matrices describing these moves are invertible, with the inverses also describing legal moves. However, one can no longer explore the group structure if we slightly change the definition of a legal move. For instance, we can call a move legal if particle $p_i$ jumping over particle $p_j$, lands on the other side of $p_j$ at a distance $c||p_i-p_j||$, where $c>0$ is a constant, different from 1.
\medskip

\medskip

\appendix\section*{Appendix}
\section*{Proof of Lemma~\ref3}

It is easy to see that condition~2 of the lemma implies condition~1. Indeed, suppose $w\in\R^d$ is a nonzero vector such that $w^TP$ is an integer vector. Then we have $w^TPv\in\Z,$ for every $v\in\Z^n$. Thus, $$G(P)\subseteq\{x\in\R^d\mid w^Tx\in\Z\},$$ but even the latter set is not dense in $\R^d$.
\medskip

To see the reverse implication, suppose that $G(P)$ is \emph{not dense} in $\R^d$, that is, the \emph{topological closure} $\overline{G}$ of $G(P)$ is not equal to $\R^d$. We claim that then there exists a nonzero vector $w\in\R^d$ such that $w^TP$ is an integer vector.

We prove this claim by induction on $d$. In the degenerate case $d=0$ $\overline(G)=G(P)=\R^0$, so there is nothing to prove. Assume that $d\ge1$ and that we have already proved the claim for dimension $d-1$. Notice that $\overline{G}$ is also an additive subgroup of $\R^d$, just like $G(P)$ is.

We distinguish two cases, according to the lengths of the nonzero vectors in $\overline{G}$.
\medskip

\noindent\textbf{Case A:} \emph{There is $\delta>0$ such that $||z||\ge\delta$, for every nonzero $z\in \overline{G}$.}

Then we have $\overline{G}=G(P)$, and $G(P)$ is a \emph{discrete} set, i.e., the minimum distance between its elements is at least $\delta$. It is well known~\cite{Gr07} (Theorem 21.2) that in this case $G(P)$ is a (possibly lower dimensional) \emph{lattice} generated by at most $d$ linearly independent vectors $u_1, u_2,\ldots, u_k\in G(P)$. We can choose a vector $w\in\R^d$ such that $w^Tu_i=1$, for every $i\; (1\le i\le k)$. This implies that $w\ne0$, but $w^Tz$ is an integer for every $z\in G(P)$. In particular, $w^TP$ is an integer vector, as required.
\medskip

\noindent\textbf{Case B:} $\inf\{||z|| : 0\neq z\in \overline{G}\}=0$.

Choose an infinite sequence of nonzero vectors $u_1, u_2,\ldots,$ from $\overline{G}$ tending to zero such that their directions also converge, say, to the direction of a \emph{unit} vector $u\in \R^d$.  For any real number $\alpha$, we have
$$\lim_{j\rightarrow\infty}\left\lfloor\frac{\alpha}{||u_j||}\right\rfloor u_j=\frac{\alpha}{||u||} u= \alpha u.$$
Since every vector on the left-hand side belongs to $\overline{G}$, so does $\alpha u$. Then the whole line $L$ induced by the vector $u$ belongs to $\overline{G}$. Let $L^{\perp}$ denote the $(d-1)$-dimensional subspace of $\R^d$ orthogonal to $L$, and let $G'=\overline{G}\cap L^{\perp}.$ Clearly, $G'$ is a closed additive subgroup of $L^{\perp}\subset\R^d$, and we have $\overline{G}=G'+L$.

If $G'$ is the whole $(d-1)$-dimensional subspace $L^{\perp}$, then $\overline{G}=G'+L=\R^d$, contradicting our assumption.

If $G'$ is not the whole $(d-1)$-dimensional subspace $L^{\perp}$, then write each $p_i$ in the form $p_i'+ l_i$ for some $p_i\in L^{\perp}$ and $l_i\in L$. Using the induction hypothesis for the vectors $p_i'$ (now considered within the $(d-1)$-dimensional space $L^\perp$), we obtain that there is a nonzero vector $w\in L^{\perp}$ such that $w^Tp'_i$ is an integer for every $i\; (1\le i\le n)$. But clearly, $w^Tp_i=w^Tp'_i$ for every $i$, so $w^TP$ is an integer vector, as needed.

\end{document}